\newtheorem{theorem}{Theorem}[section]
\newtheorem{proposition}[theorem]{Proposition}
\newtheorem{lemma}[theorem]{Lemma}
\newtheorem{corollary}[theorem]{Corollary}
\newtheorem{conjecture}[theorem]{Conjecture}
\newtheorem{example}[theorem]{Example}
\newtheorem{remark}[theorem]{Remark}
\newenvironment{proof}{{\noindent \sc Proof. }}{\hfill $\Qed$\\}
\newcommand{\la}{\langle}
\newcommand{\ra}{\rangle}
\newcommand{\Qed}{\rule{2.5mm}{3mm}}
\newcommand{\Cay}{\hbox{{\rm Cay}}}
\newcommand{\ZZ}{\mathbb{Z}}
\newcommand{\B}{{\cal{B}}}
\newcounter{case}
\renewcommand{\thecase}{\arabic{case}}
\newcounter{subcase}
\numberwithin{subcase}{case}
\newenvironment{proofT}{{\noindent \sc Proof of Theorem~\ref{the:main}.}}{\hfill $\Qed$ \\}
\newenvironment{proofT2}{{\noindent \sc Proof of Theorem~\ref{the:main2}.}}{\hfill $\Qed$ \\}
\begin{document}


\begin{center}
{\bf\large ON INTERSECTION DENSITY OF TRANSITIVE GROUPS\\
OF DEGREE A PRODUCT OF TWO ODD PRIMES} \\ [+4ex]
Ademir Hujdurovi\'c{\small$^{a,b,}$}\footnotemark,
Klavdija Kutnar{\small$^{a,b,}$}\footnotemark$^{,*}$,
\addtocounter{footnote}{0}
Bojan Kuzma{\small$^{a, b, c,}$,}\footnotemark 
\addtocounter{footnote}{0}\\
Dragan Maru\v si\v c{\small$^{a, b, c,}$}\footnotemark  
\v Stefko Miklavi\v c{\small$^{a, b, c,}$,}\footnotemark 
\ and
Marko Orel{\small$^{a, b, c,}$}\footnotemark 
\\ [+2ex]
{\it \small
$^a$University of Primorska, UP IAM, Muzejski trg 2, 6000 Koper, Slovenia\\
$^b$University of Primorska, UP FAMNIT, Glagolja\v ska 8, 6000 Koper, Slovenia\\
$^c$IMFM, Jadranska 19, 1000 Ljubljana, Slovenia}
\end{center}

\addtocounter{footnote}{-5}
\footnotetext{The work of Ademir Hujdurovi\'c  is supported in part by the Slovenian Research Agency (research program P1-0404 and research projects
N1-0062, J1-9110, N1-0102, J1-1691, J1-1694, J1-1695, N1-0140,
N1-0159, J1-2451 and N1-0208).}
\addtocounter{footnote}{1}
\footnotetext{The work of Klavdija Kutnar  is supported in part by the Slovenian Research Agency (research program P1-0285 and research projects
N1-0062, J1-9110, J1-9186, J1-1695, J1-1715, N1-0140, J1-2451,
 J1-2481 and N1-0209).}
\addtocounter{footnote}{1}
\footnotetext{The work of Bojan Kuzma  is supported in part by the Slovenian Research Agency (research program P1-0285 and research project
N1-0210).}
\addtocounter{footnote}{1}
\footnotetext{The work of Dragan Maru\v si\v c is supported in part by the Slovenian Research Agency (I0-0035, research program P1-0285
and research projects N1-0062,  J1-9108,
J1-1694, J1-1695, N1-0140 and J1-2451).}
\addtocounter{footnote}{1}
\footnotetext{
The work of \v Stefko Miklavi\v c is supported in part by the Slovenian Research Agency (research program P1-0285
and research projects N1-0062,  J1-9110,
J1-1695, N1-0140, N1-0159 and J1-2451).}
\addtocounter{footnote}{1}
\footnotetext{
The work of Marko Orel is supported in part by the Slovenian Research Agency (research program P1-0285
and research projects N1-0140, N1-0208 and N1-0210).

~*Corresponding author e-mail:~klavdija.kutnar@upr.si}

\begin{abstract}
Two elements $g$ and $h$ of a  permutation group $G$  acting on a set $V$ are said to be {\em intersecting} if $g(v) = h(v)$ for some $v \in V$.
More generally, a subset ${\cal F}$ of $G$ is an {\em intersecting set}
if every pair of elements of ${\cal F}$ is intersecting.
The {\em intersection density} $\rho(G)$ of a transitive permutation
group $G$ is the maximum value of the quotient $|{\cal F}|/|G_v|$ 
where $G_v$ is a stabilizer
of $v\in V$ and ${\cal F}$ runs over all intersecting sets in $G$.  
Intersection densities of transitive groups of degree $pq$,
where $p>q$ are odd primes, is considered. In particular,  
the conjecture that the intersection density 
of every such group is equal to $1$
(posed in [{\em J.~Combin. Theory, Ser. A} {\bf 180} (2021), 105390])
is disproved by constructing a family of imprimitive permutation groups 
of degree $pq$ (with blocks of size $q$), 
where $p=(q^k-1)/(q-1)$, whose  intersection
density is equal to $q$. The construction depends heavily on certain
equidistant  cyclic codes  $[p,k]_q$ over 
the field $\mathbb{F}_q$ whose codewords have
Hamming weight strictly smaller than $p$.
\end{abstract}

\begin{quotation}
\noindent {\em Keywords:}
intersection density, transitive permutation group,  cyclic code.
\end{quotation}

\begin{quotation}
\noindent
{\em Math. Subj. Class.:} 05C25, 20B25.
\end{quotation}


\section{Introductory remarks}
\label{sec:intro}
\noindent

Throughout  this paper $p$ and $q$ will always denote   prime numbers
with $p>q$. 

Let $G\le \mathrm{Sym}(V)$ be a permutation group acting on a set $V$,
where $\mathrm{Sym}(V)$ denotes the full symmetric group on $V$.
Two elements $g,h\in G$
are said to be {\em intersecting} if $g(v) = h(v)$ for some $v \in V$.
Furthermore, a subset ${\cal F}$ of $G$ is an {\em intersecting set}
if every pair of elements of ${\cal F}$ is intersecting.
The {\em intersection density} $\rho({\cal F})$ of the intersecting set ${\cal F}$
is defined to be the quotient
$$
\rho({\cal F})=\frac{|{\cal F}|}{\max_{v\in V}|G_v|},
$$
where $G_v$ is the point stabilizer of $v\in V$,
and the {\em intersection density} $\rho(G)$ (see \cite{LSP})
of a group $G$,
is the maximum value of
$\rho({\cal F})$ where ${\cal F}$ runs over all intersecting sets in
$G$, that is,
$$
\rho(G)=\max\{\rho({\cal F})\colon {\cal F}\subseteq G, {\cal F} \textrm{ is intersecting}\} = \frac{\max\{|{\cal F}| \colon {\cal F}\subseteq G\textrm{ is intersecting}\}}{\max_{v\in V}|G_v|}.
$$
Observe that every coset
$gG_v$, $v\in V$ and $g\in G$,
is an intersecting set, referred to as
a {\em canonical} intersecting set.
Clearly, in view of the above,
$\rho(G)\ge 1$. In particular,
for a transitive group $G$ it follows that
$\rho(G)= 1$
if and only if
the maximum cardinality of an intersecting set is $|G|/|V|$.
Following \cite{MRS21} we define ${\cal I}_n$ to be the set
of all intersection densities of transitive permutation groups
of degree $n$:
$$
{\cal I}_n=\{\rho(G) \colon G\textrm{ transitive of degree } n\},
$$
and we let $I(n)$ be the maximum value of ${\cal I}_n$.
%
The {\em derangement graph} $\Gamma_G=\Cay(G,{\cal D})$
is a Cayley graph of $G$ with
the edge set consisting of all pairs $(g,h)\in G\times G$ such that
$gh^{-1}\in {\cal D}$, where ${\cal D}$ is the set of all
fixed-point-free elements (i.e. derangements) of $G$.

The following conjecture was posed in \cite{MRS21}.

\begin{conjecture}
\label{conj:spiga} {\rm\cite[Conjecture~6.6]{MRS21}}
Let $G$ be a transitive permutation group of degree $n$. Then
the following hold.
\begin{enumerate}[(i)]
\itemsep=0pt
\item If $n$ is even, but not a power of $2$,
then there is a transitive group $H$ of degree $n$ with
$\Gamma_H$ a complete multipartite graph with $n/2$ parts.
\item If $n$ is a prime power, then $I(n)=1$.
\item If $n=pq$ where $p$ and $q$ are odd primes, then $I(n)=1$.
\item If $n=2p$ where $p$ is a prime, then $I(n)=2$.
\end{enumerate}
\end{conjecture}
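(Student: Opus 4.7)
The plan is to attack part (iii) of Conjecture~\ref{conj:spiga}, which asserts $I(pq)=1$, by searching for explicit counterexamples among imprimitive transitive groups; guided by the abstract, I would look for a construction indexed by pairs $(q,k)$ with $k\ge 2$ such that $p:=(q^k-1)/(q-1)$ is a prime different from $q$, aiming to build a transitive group $G$ of degree $pq$ with $\rho(G)=q>1$.

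For the construction, let $C\subseteq \mathbb{F}_q^p$ be an equidistant cyclic $[p,k]_q$ code; such a code exists for the stated parameters, the canonical example being the simplex code (dual of the Hamming code), whose nonzero codewords all have Hamming weight $q^{k-1}<p$. Identify $C$ (as an additive group of order $q^k$) with a group of translations on $V=\mathbb{Z}_p\times\mathbb{F}_q$ via $c\cdot(j,x)=(j,x+c_j)$, and let $\mathbb{Z}_p$ act on $V$ by the cyclic shift $(j,x)\mapsto(j+1,x)$. The cyclicity of $C$ guarantees that $\mathbb{Z}_p$ normalizes the group of translations, so $G=C\rtimes \mathbb{Z}_p$ is a well-defined subgroup of $\mathrm{Sym}(V)$ of order $p\cdot q^k$, transitive on $V$ with block system given by the fibres of the projection $V\to \mathbb{Z}_p$.

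To show $\rho(G)\ge q$, I would exhibit $C$ itself as a non-canonical intersecting set: for any two distinct $c,c'\in C$ the difference $c-c'\in C$ is a nonzero codeword, hence has weight $q^{k-1}<p$, so some coordinate $j$ satisfies $(c-c')_j=0$, at which $c$ and $c'$ agree on every point $(j,x)$. Since the stabilizer of $(0,0)$ consists exactly of the codewords vanishing at coordinate $0$ and so has order $q^{k-1}$, this yields $\rho(G)\ge q^k/q^{k-1}=q$.

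The real challenge is the reverse inequality $\rho(G)\le q$, for which the natural tool is the ratio (clique/coclique) bound applied to the derangement graph $\Gamma_G$. This requires producing a clique of size $|G|/(q\cdot|G_v|)=p$ in $\Gamma_G$; happily, the cyclic shift subgroup gives $p$ pairwise non-intersecting elements, since every nontrivial shift $(j,x)\mapsto(j+i,x)$ with $i\not\equiv 0\pmod p$ is fixed-point-free. Combined with the vertex-transitivity of $\Gamma_G$, the ratio bound then caps any coclique at $|G|/p=q^k$, forcing $\rho(G)\le q$. The most delicate point is verifying that the ratio bound is actually tight in this setting rather than merely a crude upper estimate; this is where I would expect the cyclic (not merely equidistant) nature of $C$, together with the precise interaction of the $\mathbb{Z}_p$-action with the code structure, to be essential, and it is the step where I anticipate the bulk of the technical work to lie.
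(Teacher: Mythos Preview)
Your construction and both halves of the density computation are correct and coincide with the paper's: the group $G=C\rtimes\mathbb{Z}_p$ is exactly the paper's $G(C)$, the lower bound $\rho(G)\ge q$ via the intersecting set $C$ is identical, and your upper bound via the clique--coclique inequality (using the semiregular cyclic subgroup as a $p$-clique in $\Gamma_G$) is precisely the content of the proposition the paper cites from~\cite{HKMM21}.

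Your final paragraph, however, misplaces the difficulty. Once you have a clique of size $p$ in the vertex-transitive graph $\Gamma_G$, the inequality $\omega(\Gamma_G)\cdot\alpha(\Gamma_G)\le|G|$ gives $\alpha(\Gamma_G)\le q^k$ unconditionally; together with the independent set $C$ of size $q^k$ you already exhibited, this forces $\alpha(\Gamma_G)=q^k$ and $\rho(G)=q$. There is no residual technical work and no question of ``tightness'' to resolve: tightness is automatic from your matching lower bound. The cyclicity of $C$ is needed only so that the $\mathbb{Z}_p$-shift normalises the translation group (making $G$ a group at all), not for any subtlety in the upper bound. The genuine technical input, which the paper extracts from classical facts about irreducible cyclic codes, is the \emph{existence} of a cyclic $[p,k]_q$ code with every nonzero weight strictly below $p$; you correctly name the simplex code as the canonical witness.
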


Conjectures~\ref{conj:spiga}(ii) and (iv) were settled, respectively,
in \cite{HKMM21} and \cite{R21}.
Furthermore in \cite{HKMM21} it was  shown that
$\mathcal{I}_{2p}=\{1,2\}$ for every odd prime $p$ and a complete
characterization of   groups of degree $2p$  with
intersection density $2$ was also given there.
In Proposition~\ref{pro:blocks-p} 
we show that Conjecture~\ref{conj:spiga}(iii)
is true for  transitive groups
of degree $pq$, $p>q$ odd primes, which  contain a transitive
subgroup with blocks of size $p$. 
Moreover, as a main result of this paper we give a construction
of transitive groups of degree $pq$ with blocks of size $q$ and
intersection density $q$ (see Theorem~\ref{the:main} below). 
Consequently   Conjecture~\ref{conj:spiga}(iii) is not true.

The above construction  relies heavily on certain cyclic codes 
to which  imprimitive permutation groups are associated in the following way.
(See Section~\ref{sec:pre} for basic properties of cyclic codes
needed in this construction.) 
Let $C$ be a cyclic code of length $m$ over $\mathbb{F}_q$ and let $V=\ZZ_q\times\ZZ_m$. Let 
$\alpha \in Sym(V)$ act according to the rule 
$$
\alpha\colon (i,j) \mapsto (i,j+1) \textrm{ for $i\in\ZZ_q$ and $j\in\ZZ_m$}.
$$
To each $\mathbf{c}=(c_0,c_1,\ldots,c_{m-1})\in C$ a
 permutation  $\beta_{\mathbf{c}}$ acting according to the rule
$$
\beta_{\mathbf{c}} \colon (i,j) \mapsto (i+c_j,j) \textrm{ for $i\in\ZZ_q$ and $j\in\ZZ_m$}
$$
is assigned.
Finally, we let $G(C)\leq Sym(V)$ be the permutation group 
generated by $\{\alpha\}\cup \{\beta_{\mathbf{c}}\mid \mathbf{c}\in C \}$. 
Clearly, $G(C)$ is an imprimitive permutation group with $m$ blocks
$\{(i,j)\colon i\in \ZZ_q\}$ of size $q$.

The following two theorems are the main results of this paper.

\begin{theorem}
\label{the:main2}
Let $q$ be a prime and $m$ a positive integer. 
Let $C$ be a nonzero 
cyclic code over $\mathbb{F}_q$ of length $m$
such that no codeword has maximal Hamming weight $m$.
Then the corresponding permutation group $G(C)$ of $\ZZ_q\times\ZZ_m$ has intersection density equal to $q$.
\end{theorem}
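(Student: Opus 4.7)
\medskip
\noindent\textbf{Proof plan.} The strategy is to combine a direct lower bound coming from the code with the clique--coclique inequality for the derangement graph of $G(C)$.

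First I would pin down the structure of $G(C)$. A short computation gives $\alpha\beta_{\mathbf{c}}\alpha^{-1}=\beta_{\sigma(\mathbf{c})}$, where $\sigma$ denotes the cyclic shift $(c_0,\ldots,c_{m-1})\mapsto(c_{m-1},c_0,\ldots,c_{m-2})$; since $C$ is cyclic, the set $B:=\{\beta_{\mathbf{c}}\colon \mathbf{c}\in C\}$ is a subgroup normalised by $\langle\alpha\rangle$, and $\mathbf{c}\mapsto\beta_{\mathbf{c}}$ is an injective homomorphism. Hence $B$ is elementary abelian of order $q^k$, where $k=\dim_{\mathbb{F}_q}C\geq 1$. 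Any nontrivial $\alpha^i$ moves the second coordinate while every $\beta_{\mathbf{c}}$ fixes it, so $\langle\alpha\rangle\cap B=\{\mathrm{id}\}$, $G(C)=\langle\alpha\rangle\ltimes B$ has order $mq^k$, and because $\{c_0\colon \mathbf{c}\in C\}$ is an additive subgroup of $\mathbb{F}_q$ that, by cyclicity and nontriviality of $C$, must be all of $\mathbb{F}_q$, the group $G(C)$ is transitive on $\ZZ_q\times\ZZ_m$. Consequently $|G_v|=q^{k-1}$.

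For the lower bound I would exhibit $B$ itself as an intersecting set. Two elements $\beta_{\mathbf{c}}$ and $\beta_{\mathbf{c}'}$ intersect precisely when $c_j=c'_j$ for some $j$, that is, when the codeword $\mathbf{c}-\mathbf{c}'\in C$ has a zero entry. The hypothesis that no codeword has Hamming weight $m$ is exactly the statement that every element of $C$ has a zero coordinate, so all pairs in $B$ are intersecting; this yields $\rho(G(C))\geq |B|/|G_v|=q$.

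For the matching upper bound I would invoke the clique--coclique inequality on the vertex-transitive Cayley graph $\Gamma_{G(C)}$: the product of the independence number and the clique number is at most $|G(C)|$, and an intersecting set is nothing but an independent set in $\Gamma_{G(C)}$. The cyclic subgroup $\langle\alpha\rangle$ supplies a clique of size $m$, since $\alpha^i(r,s)=(r,s+i)$ is fixed-point-free whenever $i\not\equiv 0\pmod m$, and so distinct elements of $\langle\alpha\rangle$ differ by a derangement. Therefore the independence number of $\Gamma_{G(C)}$ is at most $|G(C)|/m=q^k$, and dividing by $|G_v|$ yields $\rho(G(C))\leq q$. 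Once the semidirect product structure is in hand, the two conceptual points---that the cyclic-code hypothesis is precisely pairwise intersection inside $B$, and that $\alpha$ provides a natural clique of size $m$---are immediate, so the main (and essentially only) technical burden is verifying those structural statements about $G(C)$.
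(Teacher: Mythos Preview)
Your argument is correct and follows the same route as the paper: establish the semidirect-product structure $G(C)=B\rtimes\langle\alpha\rangle$ with $|G(C)|=mq^{k}$ and $|G_v|=q^{k-1}$, exhibit $B$ as an intersecting set of size $q^{k}$ for the lower bound, and use $\langle\alpha\rangle$ for the upper bound. The only cosmetic difference is that the paper quotes \cite[Proposition~2.6]{HKMM21} (a semiregular subgroup with $q$ orbits forces $\rho\le q$) whereas you unpack that statement as the clique--coclique bound applied to the derangement graph with $\langle\alpha\rangle$ as the clique; these are the same argument.
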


\begin{theorem}
\label{the:main}
Let $p$ and $q$ be odd primes such that $p=\frac{q^k-1}{q-1}$ for some positive integer $k$. Then there exists an imprimitive group of degree $pq$ with
blocks of size $q$ whose  intersection density equals $q$.
\end{theorem}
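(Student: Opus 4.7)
The plan is to apply Theorem~\ref{the:main2} with $m=p$: it suffices to construct a nonzero cyclic code $C$ of length $p$ over $\mathbb{F}_q$ such that no codeword has Hamming weight $p$. Then $G(C)$ is automatically imprimitive of degree $pq$ with blocks of size $q$, and Theorem~\ref{the:main2} yields $\rho(G(C))=q$.

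The starting point is the identity $q^k-1=p(q-1)$, which shows $p\mid q^k-1$, so $\mathbb{F}_{q^k}$ contains a primitive $p$-th root of unity $\alpha$. Let $d$ be the multiplicative order of $q$ modulo $p$; then $d\mid k$. If $d<k$, then $d\le k-1$ and $p\mid q^d-1$ would give $p\le q^d-1\le q^{k-1}-1<1+q+\cdots+q^{k-1}=p$, a contradiction. Hence $d=k$, and $x^p-1$ factors over $\mathbb{F}_q$ as $(x-1)f_1(x)\cdots f_r(x)$ with each $f_i$ irreducible of degree $k$ and $r=(p-1)/k$.

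Take $C$ to be the irreducible cyclic $[p,k]_q$ code with generator polynomial $(x^p-1)/f_1(x)$. Its $q^k$ codewords admit the trace parametrisation
$$
c_\beta=\bigl(\mathrm{Tr}_{\mathbb{F}_{q^k}/\mathbb{F}_q}(\beta\alpha^i)\bigr)_{i=0}^{p-1},\qquad \beta\in\mathbb{F}_{q^k},
$$
with $\alpha$ a fixed root of $f_1$. Since $\gcd(p,q-1)=1$ and $|\mathbb{F}_{q^k}^{*}|=p(q-1)$, one has $\mathbb{F}_{q^k}^{*}=\mathbb{F}_q^{*}\cdot\langle\alpha\rangle$; hence every nonzero $\beta$ factors as $a\alpha^{j}$ with $a\in\mathbb{F}_q^{*}$ and $0\le j<p$, giving $c_\beta=a\,\sigma^{j}(c_1)$ where $\sigma$ denotes the cyclic shift. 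Therefore all nonzero codewords share a common weight $w$. A double count of the pairs $(\beta,i)$ with $\mathrm{Tr}(\beta\alpha^i)\ne 0$ yields $(q^k-1)w=\sum_{\beta\ne 0}\mathrm{wt}(c_\beta)=p(q^k-q^{k-1})$, so $w=q^{k-1}$. Since $k\ge 2$, we obtain $q^{k-1}<1+q+\cdots+q^{k-1}=p$, so no codeword of $C$ has weight $p$, and Theorem~\ref{the:main2} finishes the argument.

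The technical heart of the proof is the last step: showing that the irreducible cyclic code $C$ is equidistant with common nonzero weight $q^{k-1}$, strictly smaller than $p$. The cyclotomic bookkeeping (order of $q$ modulo $p$, factorisation of $x^p-1$) is routine, and no genuine obstacle is expected beyond verifying that this particular $C$ simultaneously is cyclic, is nonzero, and avoids full-weight codewords.
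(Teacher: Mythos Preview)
Your proof is correct and follows the same overall strategy as the paper: construct the irreducible cyclic $[p,k]_q$ code with check polynomial an irreducible factor of $\Phi_p(x)$, show every nonzero codeword has weight $q^{k-1}<p$, and then invoke Theorem~\ref{the:main2}.

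Where you differ is in how you establish equidistance and compute the common weight. The paper appeals to McEliece's Gauss-sum estimate (Lemma~\ref{lemmaMarko1}) as a black box and observes that under condition~\eqref{eq4Marko} the bound degenerates to an equality, yielding equidistance; this is packaged as Corollaries~\ref{corMarko1}--\ref{corMarko4}. You instead give a self-contained argument: the hypothesis $q^{k}-1=p(q-1)$ together with $\gcd(p,q-1)=1$ forces $\mathbb{F}_{q^k}^{\ast}=\mathbb{F}_q^{\ast}\times\langle\alpha\rangle$, so via the trace description every nonzero codeword is a scalar multiple of a cyclic shift of $c_1$, hence of the same weight; a double count then pins the weight to $q^{k-1}$. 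Your route is more elementary and avoids citing the McEliece/Niederreiter machinery, at the cost of being tailored to this specific parameter relation; the paper's route, through the general inequality~\eqref{eq3Marko}, also powers the additional examples in Remark~\ref{remark2:marko} beyond the projective-prime case.
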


In Section~\ref{sec:pq} the structure of transitive permutation groups
of degree $pq$ is described, 
in Section~\ref{sec:pre} basic properties of cyclic codes are presented,
and in Section~\ref{sec:xx}
the proofs of Theorems~\ref{the:main2} and~\ref{the:main} are given.


\section{Hierarchy of transitive groups of degree $pq$}
\label{sec:pq}
\noindent

Let $G$ be a
transitive permutation group $G$ acting on a set $V$.
A partition $\B$ of $V$ is called $G$-{\em invariant}
if the elements of $G$ permute the parts, the so called
{\em blocks} of $\B$, setwise.
If the trivial partitions $\{V\}$ and $\{\{v\}: v \in V\}$
are the only
$G$-invariant partitions of $V$, then $G$ is {\em primitive},
and is {\em imprimitive} otherwise.
In the latter case a corresponding nontrivial
$G$-invariant partition will be referred to as a
{\em complete imprimitivity block system} of $G$.
We say that $G$ is {\em doubly transitive} if given any two ordered pairs
$(u,v)$ and $(u',v')$ of  elements $u,v,u',v'\in V$, such that $u\ne v$ and $u'\ne v'$,
there exists an element  $g\in G$ such that $g(u,v)=(u',v')$.
Note that a doubly transitive group is primitive.
A primitive group which is not doubly transitive is called {\em simply primitive}.

A transitive group of degree $pq$ falls into one of the following
three classes: it either has blocks of size $p$ or it has blocks
of size $q$ or it is a primitive group. (For a detailed description
of these groups see \cite{MS2,MS5,PWX,PX}.)
In the latter case the group is either doubly transitive in which
case the intersection density is known to be equal to $1$
(see \cite[Lemma~2.1(3)]{MRS21}) or it is simply primitive.
Using the fact  that $\rho(G)\le \rho(H)$ for transitive groups
$H\le G$ (see \cite[Lemma~6.5]{MRS21})
the list of all simply primitive groups is further
reduced to a shorter list of simply primitive groups containing
no imprimitive subgroups (see \cite{DKM21,MS5}).
For each group on this list the corresponding
intersection density will have
to be computed.
Coming back to imprimitive groups, the first of the above three
classes is the easiest to deal with
when considering intersection density. The following holds.

\begin{proposition}
\label{pro:blocks-p}
Let $G$ be a transitive group of degree $pq$, $p>q$ primes,  containing
an imprimitive subgroup $H$ with $q$ blocks of size $p$.
Then $\rho(G)=1$.
\end{proposition}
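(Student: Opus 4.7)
\emph{Strategy.} My plan is to use the monotonicity $\rho(G)\le\rho(H)$ for transitive subgroups (quoted in the introduction from \cite[Lemma~6.5]{MRS21}) to reduce the claim to proving $\rho(H)=1$. Let $\mathcal{B}=\{B_1,\dots,B_q\}$ be the $H$-invariant block system with blocks of size $p$, set $K=\ker(H\to\mathrm{Sym}(\mathcal{B}))$, and let $\pi\colon H\to Q:=H/K$ be the quotient map onto a transitive permutation group $Q$ of prime degree $q$ on $\mathcal{B}$. I will then perform a ``double descent'' along the tower $1\trianglelefteq K\trianglelefteq H$: both the top quotient $Q$ and each restriction $K|_{B_i}$ will be transitive of prime degree, which is the regime in which intersection density $1$ is already known by Conjecture~\ref{conj:spiga}(ii), settled in \cite{HKMM21}.

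\emph{Main steps.} Let $\mathcal{F}\subseteq H$ be a maximum intersecting set, translated so that $1\in\mathcal{F}$. First I would observe that $\pi(\mathcal{F})$ is intersecting in $Q$ (if $gh^{-1}$ fixes a point, it fixes the block containing it), whence $|\pi(\mathcal{F})|\le|Q|/q$. Next, for each $\bar x\in\pi(\mathcal{F})$, picking $x_0\in\mathcal{F}\cap\pi^{-1}(\bar x)$, the translate $\mathcal{F}_{\bar x}x_0^{-1}$ is an intersecting subset of $K$, so $|\mathcal{F}_{\bar x}|\le m$, where $m$ denotes the largest size of an intersecting subset of $K$ (considered inside $\mathrm{Sym}(V)$). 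Summing the fibres gives $|\mathcal{F}|\le(|Q|/q)\cdot m$, and since $|H_v|=|H|/(pq)=(|Q|/q)(|K|/p)$, everything reduces to establishing the bound $m\le|K|/p$.

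\emph{Main obstacle.} The substantial part of the argument is the bound $m\le|K|/p$. I would first check that $K$ acts transitively on every block: as a normal subgroup of $H_{B_i}|_{B_i}$ (which is transitive of prime degree $p$), $K|_{B_i}$ is either trivial or transitive, and conjugation by $H$ propagates this alternative from one block to all blocks; the trivial case would give $H\hookrightarrow\mathrm{Sym}(\mathcal{B})$, forcing $p\mid q!$, which is absurd. Next, I would take a Sylow $p$-subgroup $P\le K$: its image in each $K|_{B_i}$ is a Sylow $p$-subgroup of a transitive group of prime degree $p$ and is therefore nontrivial, so the kernel $P_i$ of $P\to K|_{B_i}$ satisfies $|P_i|\le|P|/p$. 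The decisive point---and the place where the hypothesis $q<p$ enters---is the crude union bound $|\bigcup_i P_i|\le q|P|/p<|P|$, which guarantees some $g\in P$ acting nontrivially on every block; being a $p$-element restricted to a set of size $p$, such a $g$ must act as a $p$-cycle on each $B_i$, so $\langle g\rangle$ is a cyclic subgroup of order $p$ in $K$ every nonidentity element of which is a derangement of $V$. This is a clique of size $p$ in the (vertex-transitive) derangement graph of $K$ acting on $V$, and the clique-coclique bound then yields $m\le|K|/p$, completing the proof. I expect this last step---extracting a full ``diagonal'' $p$-cycle by union-bounding the $P_i$---to be the real heart of the argument, and it is precisely here that the hypothesis $p>q$ is essential (in line with Theorem~\ref{the:main}, which shows the statement fails when the roles of $p$ and $q$ are interchanged).
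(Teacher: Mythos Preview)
Your proposal is correct and follows the same underlying strategy as the paper, though it is considerably more self-contained. The paper's proof is very terse: it quotes \cite{DM81} for the existence of a semiregular element $\alpha$ of order $p$ in $H$ whose orbit partition is the block system $\mathcal{B}$, then quotes \cite[Lemma~3.1]{HKMM21} for the inequality $\rho(H)\le\rho(\bar H)$ (where $\bar H$ is the induced action on $\mathcal{B}$), and finally quotes \cite[Theorem~1.4]{HKMM21} for $\rho(\bar H)=1$ since $\bar H$ has prime degree $q$. You recover exactly these ingredients by hand: your Sylow/union-bound argument (using $q<p$) is a direct construction of the semiregular $p$-element that the paper imports from \cite{DM81}, and your fibre decomposition of $\mathcal{F}$ over $Q$ together with the clique--coclique bound on $\Gamma_K$ is an unpacking of the cited inequality $\rho(H)\le\rho(\bar H)$. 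So the two proofs coincide in substance; yours simply re-derives the quoted lemmas rather than citing them, and your identification of the union bound as ``the real heart of the argument'' is accurate---it is precisely the step hidden inside the reference to \cite{DM81}.
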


\begin{proof}
It may be seen that $H$ contains  a derangement of order $p$,
in fact a semiregular element $\alpha$ of order $p$ such that
the set of orbits of $\la\alpha\ra$ forms an $H$-invariant partition $\B$,
see \cite{DM81}. Let $\bar{H}$ be the permutation group
induced by the action of $H$ on $\B$.
Then, by \cite[Lemma~3.1]{HKMM21}, $\rho(H)\leq \rho(\bar{H})$.
Since $\bar{H}$ is a transitive group  of prime degree we have
  $\rho(\bar{H})=1$ by \cite[Theorem~1.4]{HKMM21},
and so $\rho(H)=1$.
Moreover, by \cite[Lemma~6.5]{MRS21},
$\rho(G)\le \rho(H)$,  and the result follows.
\end{proof}

This leaves us with   imprimitive groups having blocks of size $q$ and
containing no transitive subgroups having blocks of size $p$.
The case $q=2$ was settled in \cite{HKMM21,R21}.
As mentioned in Section~\ref{sec:intro} 
we will construct in Section~\ref{sec:xx}
a family of imprimitive groups with blocks of size $q$
having intersection density equal to $q$, thus
disproving Conjecture~\ref{conj:spiga}(iii).
In Example~\ref{ex:33} below we give the smallest counter-example to this
conjecture, a group  of degree $3\cdot 11$ with intersection density $3$.
This group, however, is  not part of the family 
of groups from Theorem~\ref{the:main}
where the smallest group occurs for $q=3$
and $p=13$.
In summary, in order to obtain
a complete characterization of intersection densities
of transitive groups of degree $pq$, simply primitive groups
from the above mentioned list and imprimitive groups
with blocks of size $q$ will have to be addressed.

\begin{example}\label{ex:33}
{\rm
Let $a,b_0\in Sym(\ZZ_{33})$ be defined with
$a(i)=i+3 \pmod{33}$ and $b_0=(0\,1\,2)$. For $k\in \{1,\ldots,10\}$ let $b_k=b_0^{a^k}=(3k\,\,3k+1\,\, 3k+2)$ and let
$b=b_0\cdot b_2 \cdot b_3^2 \cdot b_4^2 \cdot b_5^2 \cdot b_6$. Define
 $G=\langle a,b \rangle \leq Sym(\ZZ_{33})$. It can be verified (with MAGMA, for example) that $G$ is a transitive group of order $3^5 \cdot 11$ admitting blocks  $\{3k,3k+1,3k+2\}$, $k\in\ZZ_{11}$,
of size 3. The kernel $K$ of the action of $G$ on these blocks 
 is an elementary abelian group of order $3^5$, and contains 
no non-identity semiregular element. Hence $K$ is 
an intersecting set of size $3^5=3\cdot |G_v|$. 
This shows that $\rho(G)\geq 3$. 
Since $G$ admits a semiregular subgroup $\langle a \rangle$ with three orbits, by \cite[Proposition 2.6]{HKMM21} it follows that $\rho(G)=3$.

}
\end{example}


\section{Cyclic codes}
\label{sec:pre}
\noindent

Let $m$ be a positive integer, $r$ a power of a prime, and $\mathbb{F}_r$ the finite field with $r$ elements. The polynomial $x^m-1\in \mathbb{F}_r[x]$ has no repeated factors (which are irreducible over $\mathbb{F}_r$) if and only if $r$ and $m$ are relatively prime, i.e. $gcd(r,m)=1$ 
(see \cite[Exercise~201]{pless}), which we assume in this section.

Let $\mathbb{F}_r^m$ be the $m$-dimensional vector space over $\mathbb{F}_r$  formed by all row vectors $(c_0,c_1,\ldots,c_{m-1})$ with entries in $\mathbb{F}_r$. Let $C$ be a linear $[m,k]_r$ code, that is, a $k$-dimensional 
vector subspace in $\mathbb{F}_r^m$. A linear code $C$ is \emph{cyclic} if $(c_0,c_1,\ldots,c_{m-1})\in C$ implies $(c_{m-1},c_0,\ldots,c_{m-2})\in C$. The vector space $\mathbb{F}_r^m$ can be identified with the principal ideal domain $\mathbb{F}_r[x]/(x^m-1)$. Under this identification,  cyclic codes correspond 
exactly to the ideals in $\mathbb{F}_r[x]/(x^m-1)$ (see \cite[Theorem~9.36]{LN} or \cite[Theorem~4.2.1]{pless}). 
The \emph{generating polynomial} $g(x)$ of a nonzero cyclic code $C$ is
the unique monic polynomial of the lowest degree in $C$. In this case
$C=\langle g(x)\rangle:=\{a(x)g(x) \colon a(x)\in \mathbb{F}_r[x])\}$,
where the multiplication is done modulo $x^m-1$. Moreover, $g(x)$
divides the polynomial $x^m-1$ in $\mathbb{F}_r[x]$, and the dimension
of the cyclic code $C$ equals $k=m-\deg g(x)$.
The polynomial $h(x)=(x^m-1)/g(x)$ 
is the \emph{parity-check polynomial} of  $C$. If $g(x)=\sum_{i=0}^{m-k}g_i x^i$ where $g_i\in \mathbb{F}_r$, then $C$, viewed in $\mathbb{F}_r^m$, is spanned by $k$ vectors 
$$
(g_0,g_1,\ldots,g_{m-k},0,\ldots,0), (0,g_0,g_1,\ldots,g_{m-k},0,\ldots,0), \ldots,  (0,\ldots,0,g_0,g_1,\ldots,g_{m-k}).
$$
Let
\begin{equation}\label{eq1Marko}
\Phi_m(x)=\prod_{d|m}(x^d-1)^{\mu(m/d)}
\end{equation}
be the \emph{$m$-th cyclotomic polynomial}. Here $\mu$ is the M\"{o}bius function 
$$\mu(t)=\left\{\begin{array}{lll}1 &\textrm{if}\ t=1,\\
0& \textrm{if a square of some prime divides}\ t,\\
(-1)^s& \textrm{if}\ t\ \textrm{is a product of}\ s\ \textrm{distinct primes}.\end{array}\right.$$
Then, by \cite[Equation~9.20]{wan}, we have 
\begin{equation}\label{eq2Marko}
x^m-1=\prod_{d|m}\Phi_d(x).
\end{equation}
By \cite[Theorem~9.14]{wan}, the function \eqref{eq1Marko} is indeed a polynomial with integer coefficients. Hence, cyclotomic polynomials can be understood also as elements in $\mathbb{F}_r[x]$. Since $gcd(r,m)=1$,   \cite[Theorem~9.16]{wan} implies that $\Phi_m(x)$ is a product of $\phi(m)/k$ distinct monic polynomials in $\mathbb{F}_r[x]$ that are irreducible over $\mathbb{F}_r$ and of degree $k$, which is the least positive integer such that 
$r^k=1\, (\textrm{mod}~m).$
Here, $\phi$ is the Euler function. If $h(x)\in \mathbb{F}_r[x]$ is one of the 
 irreducible factors of $\Phi_m(x)$  over $\mathbb{F}_r$,
then, by \eqref{eq2Marko}, 
it divides   the polynomial $x^m-1\in \mathbb{F}$. 
Hence, $h(x)$ is the parity-check polynomial of the cyclic code $\langle g(x)\rangle$, where $g(x)=(x^m-1)/h(x)$. The following result is proved in \cite[Equation~2.10]{mceliece} (see also~\cite{niederreiter77}).

\begin{lemma}[\cite{mceliece}]\label{lemmaMarko1}
Let $gcd(r,m)=1$ and let $h(x)\in \mathbb{F}_r[x]$ be a monic factor in $\Phi_m(x)$, which is irreducible over $\mathbb{F}_r$ and of degree $k$. If ${\bf c}$ is any nonzero codeword in the cyclic $[m,k]_r$ code $C$ with the parity-check polynomial $h(x)$, then the number $Z({\bf c})$ of zero entries in ${\bf c}$ satisfies
\begin{equation}\label{eq3Marko}
\left|Z({\bf c})-\frac{(r^{k-1}-1)m}{r^k-1}\right|\leq \left(1-\frac{1}{r}\right)\left(\frac{gcd(m,r-1)}{r-1}-\frac{m}{r^k-1}\right)r^{k/2}.
\end{equation}
\end{lemma}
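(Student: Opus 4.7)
The plan is to prove the bound via the trace representation of the code combined with Gauss sum estimates. First I would set up the trace form: since $h(x)$ is an irreducible factor of $\Phi_m(x)$ of degree $k$, every root $\alpha$ of $h(x)$ lies in $\mathbb{F}_{r^k}$ and has multiplicative order exactly $m$. I would then show that the map $\beta \mapsto (\mathrm{Tr}_{\mathbb{F}_{r^k}/\mathbb{F}_r}(\beta \alpha^j))_{j=0}^{m-1}$ is an injective $\mathbb{F}_r$-linear map $\mathbb{F}_{r^k} \to \mathbb{F}_r^m$ whose image is a $k$-dimensional code annihilated by $h$, hence equal to $C$. So every nonzero codeword is $c_j = \mathrm{Tr}(\beta \alpha^j)$ for a unique $\beta \in \mathbb{F}_{r^k}^*$.

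Next I would apply orthogonality of the additive characters of $\mathbb{F}_r$. Fixing a nontrivial character $\chi$ of $\mathbb{F}_r$ and letting $\psi := \chi \circ \mathrm{Tr}$ be the resulting nontrivial additive character of $\mathbb{F}_{r^k}$, I would write
\[
Z(\mathbf{c}) = \frac{1}{r}\sum_{j=0}^{m-1}\sum_{t\in\mathbb{F}_r}\chi(tc_j) = \frac{m}{r} + \frac{1}{r}\sum_{t\in\mathbb{F}_r^*}\sum_{j=0}^{m-1}\psi(t\beta\alpha^j).
\]
To handle the inner sum over the cyclic subgroup $H=\langle \alpha\rangle\leq\mathbb{F}_{r^k}^*$ of index $d=(r^k-1)/m$, I would Fourier-expand the indicator of $H$ using the $d$ multiplicative characters $\eta$ of $\mathbb{F}_{r^k}^*$ that are trivial on $H$:
\[
\sum_{j=0}^{m-1}\psi(\gamma\alpha^j) = \frac{1}{d}\sum_{\eta(H)=1}\bar\eta(\gamma)\,G(\eta,\psi),
\]
where $G(\eta,\psi)=\sum_{x\in\mathbb{F}_{r^k}^*}\eta(x)\psi(x)$ is a Gauss sum. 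For $\eta$ trivial this equals $-1$; for $\eta$ nontrivial, the classical Gauss sum identity gives $|G(\eta,\psi)|=r^{k/2}$.

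Substituting back and swapping the order of summation isolates the factor $\sum_{t\in\mathbb{F}_r^*}\bar\eta(t)$, which equals $r-1$ if $\eta$ is trivial on $\mathbb{F}_r^*\subseteq\mathbb{F}_{r^k}^*$ and $0$ otherwise. The trivial contribution ($\eta = \mathbf{1}$) yields
\[
\frac{m}{r} - \frac{r-1}{rd} \;=\; \frac{(r^{k-1}-1)m}{r^k-1},
\]
which I would recognize as the main term appearing on the left-hand side of \eqref{eq3Marko}. The remaining contributions come from nontrivial characters $\eta$ that are simultaneously trivial on $H$ and on $\mathbb{F}_r^*$; these are the characters of $\mathbb{F}_{r^k}^*/(H\cdot\mathbb{F}_r^*)$, a group whose order I would compute via $|H\cdot\mathbb{F}_r^*|=m(r-1)/\gcd(m,r-1)$, giving exactly $\dfrac{(r^k-1)\gcd(m,r-1)}{m(r-1)}-1 = \dfrac{d\gcd(m,r-1)}{r-1}-1$ such nontrivial $\eta$'s. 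Applying the triangle inequality with $|G(\eta,\psi)|=r^{k/2}$ then yields
\[
\left|Z(\mathbf{c})-\frac{(r^{k-1}-1)m}{r^k-1}\right| \leq \frac{r-1}{rd}\left(\frac{d\gcd(m,r-1)}{r-1}-1\right)r^{k/2},
\]
which simplifies to the stated bound after collecting the factor $1-1/r$.

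The main obstacle I expect is the bookkeeping in the last step: identifying the correct subgroup $H\cdot\mathbb{F}_r^*$, counting the multiplicative characters that are simultaneously trivial on it, and matching the resulting expression exactly with $\bigl(\gcd(m,r-1)/(r-1) - m/(r^k-1)\bigr)$. Everything else (the trace representation, the Fourier expansion on $\mathbb{F}_{r^k}^*$, and the Weil/Gauss bound $r^{k/2}$) is standard, but the precise constant in \eqref{eq3Marko} is sensitive to correctly accounting for the $\eta = \mathbf{1}$ term in the main sum versus the $\eta \ne \mathbf{1}$ characters in the error term.
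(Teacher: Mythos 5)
The paper does not prove this lemma at all --- it is quoted directly from McEliece \cite{mceliece} (Equation 2.10 there; see also \cite{niederreiter77}) --- so there is no internal proof to compare against. Your argument is a correct and complete reconstruction of the standard proof in those references: the trace representation $c_j=\mathrm{Tr}(\beta\alpha^j)$, the additive-character expansion of $Z(\mathbf{c})$, the Fourier expansion of the indicator of $H=\langle\alpha\rangle$ via the $d=(r^k-1)/m$ multiplicative characters trivial on $H$, and the Weil bound $|G(\eta,\psi)|=r^{k/2}$. I checked the bookkeeping you were worried about: the $\eta=\mathbf{1}$ term gives $\tfrac{m}{r}-\tfrac{r-1}{rd}=\tfrac{(r^{k-1}-1)m}{r^k-1}$, the count $\tfrac{d\gcd(m,r-1)}{r-1}-1$ of nontrivial $\eta$ killing $H\cdot\mathbb{F}_r^*$ is right (using $|H\cap\mathbb{F}_r^*|=\gcd(m,r-1)$ in the cyclic group $\mathbb{F}_{r^k}^*$), and the resulting bound simplifies exactly to \eqref{eq3Marko}. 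The only loose end is a convention point: the recurrence $\sum_i h_ic_{j+i}=\mathrm{Tr}(\beta\alpha^jh(\alpha))=0$ shows the trace code is annihilated by $h$ read in the appropriate (possibly reciprocal) order, so your identification of the image with $C$ may require replacing $\alpha$ by $\alpha^{-1}$; since $\alpha^{-1}$ is also a primitive $m$-th root of unity with irreducible minimal polynomial of degree $k$ dividing $\Phi_m$, this changes nothing in the estimate.
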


Recall that the value $w({\bf c}):=m-Z({\bf c})$ is the \emph{Hamming weight} of a codeword ${\bf c}$ in a $[m,k]_r$ code. If a code in Lemma~\ref{lemmaMarko1} satisfies
\begin{equation}\label{eq4Marko}
\frac{r^k-1}{r-1}=\frac{m}{gcd(m,r-1)},
\end{equation}
then all its nonzero codewords have constant weight equal to
\begin{equation}\label{eq6Marko}
m-\frac{(r^{k-1}-1)m}{r^k-1}=m-(1+r+r^2+\cdots+r^{k-2})gcd(m,r-1).
\end{equation}
In this case, the linearity of the code implies that the \emph{Hamming distance} $d({\bf c}_1,{\bf c}_2):=w({\bf c}_1-{\bf c}_2)$ attains constant value~\eqref{eq6Marko} for all distinct codewords ${\bf c}_1,{\bf c}_2\in C$, and the code is referred to as \emph{equidistant}.


\section{A family of groups with intersection density $q$}
\label{sec:xx}
\noindent


We start this section by proving Theorem~\ref{the:main2}.
Recall from the introductory section
that to every cyclic code $C$ of length $m$ over $\mathbb{F}_q$
we can associate 
an imprimitive permutation group $G(C)$
acting on $\ZZ_q\times\ZZ_m$. 

\bigskip

\begin{proofT2}
Let $C$ be a nonzero cyclic code of length $m$ over $\mathbb{F}_q$ such that no codeword has maximal Hamming weight $m$, and let $G(C)$ be the permutation group associated with $C$. Let $K$ be the subgroup of $G(C)$ generated by $\{\beta_{\mathbf{c}}\mid \mathbf{c}\in C \}$. Observe that $\beta:C\to K$ defined by $\beta(\mathbf{c})= \beta_{\mathbf{c}}$ is an isomorphism between the additive group of the code $C$ and the group $K$. Therefore, $K$ is an elementary abelian group of order $q^k$, where $k$ is the dimension of $C$. Observe that $K$ is normalized by $\alpha$, hence $G(C)\cong K\rtimes \langle \alpha \rangle$. It follows that $|G(C)|=mq^k$. 

Since $C$ is a nonzero cyclic code, it follows that for each $j\in \ZZ_m$ there exists $\mathbf{c} \in C$ with $c_j\neq 0$. Considering the action of $\beta_{\mathbf{c}}$  it follows that $K$ acts transitively on each of the sets $\ZZ_q\times \{j\}$, for each $j\in \ZZ_m$, and using the fact that $\alpha$ permutes the sets $\ZZ_q\times \{j\}$ it follows that $G(C)$ acts transitively on $\ZZ_q\times \ZZ_m$. By the orbit-stabilizer theorem, it follows that the order of a point stabilizer in $G(C)$ is $\frac{mq^k}{mq}=q^{k-1}$. 

Let $\mathbf{c}\in C$ and $j\in \ZZ_m$ such that $c_j=0$. 
Observe that $ \beta_{\mathbf{c}}$ fixes each element of the set $\ZZ_q\times \{j\}$. The assumption that each codeword of $C$ has a zero entry implies that each element of $K$ has a fixed point. Since $K$ is a subgroup of $G(C)$ it follows that $K$ is an intersecting set of $G(C)$ of size $q ^k$. We conclude that $\rho(G)\geq q$. 

On the other hand, since $G(C)$ admits a semiregular subgroup $\langle \alpha \rangle$ with $q$ orbits, by \cite[Proposition 2.6]{HKMM21} it follows that $\rho(G(C))\leq q$, and so  $\rho(G(C))=q$.
\end{proofT2}

Theorem~\ref{the:main2} gives us a method of constructing transitive 
permutation groups with intersection density equal to $q$. However, the 
construction of cyclic codes of prime length $p$ over the field $\mathbb{F}_q$ 
with no codeword having the maximal Hamming weight $p$ is an intriguing 
problem, which we address in the remainder of this section. The smallest 
example is an $[11,5]_3$ cyclic code, and the corresponding permutation group 
$G(C)$ is given in Example~\ref{ex:33}.
The following lemma will be needed.

\begin{lemma}\label{lemmaMarko2}
Let $r$ be a power of a prime, and let 
$k$ and $m$ be positive integers 
satisfying~\eqref{eq4Marko}. Then $gcd(m,r)=1$ and $k$ is the smallest positive integer such that $r^k\equiv1\, (\textrm{mod}~m)$.
\end{lemma}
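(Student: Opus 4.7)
The plan is to let $d=\gcd(m,r-1)$ and rewrite the hypothesis~\eqref{eq4Marko} as the identity
$$
m = d\cdot\frac{r^k-1}{r-1} = d\bigl(1+r+r^2+\cdots+r^{k-1}\bigr),
$$
which will then do essentially all of the work.

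First I would derive $\gcd(m,r)=1$ from this identity. Since $d$ divides $r-1$, which is coprime with $r$, we get $\gcd(d,r)=1$. Also $1+r+r^2+\cdots+r^{k-1}\equiv 1\pmod{r}$, so this factor is coprime with $r$ as well. Hence $\gcd(m,r)=1$.

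Next, the identity rearranges to $m(r-1)/d = r^k-1$, where $(r-1)/d$ is a positive integer because $d\mid r-1$; thus $m\mid r^k-1$, i.e.\ $r^k\equiv 1\pmod{m}$. So it remains to prove that $k$ is the \emph{smallest} positive integer with this property. I would argue by a direct size estimate: if $1\le j<k$ satisfied $r^j\equiv 1\pmod{m}$, then $m\le r^j-1$, whereas the identity together with $d\ge 1$ gives
$$
m \ge 1+r+r^2+\cdots+r^{k-1} > r^{k-1} \ge r^j > r^j-1,
$$
a contradiction. Therefore no positive integer $j<k$ satisfies $r^j\equiv 1\pmod{m}$, and $k$ is the required multiplicative order.

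I do not expect a serious obstacle here; the whole lemma is really one substitution and one inequality. The only small point to watch is to handle the case $j\ge 1$ explicitly when invoking $m\le r^j-1$ (for $j=0$ the divisibility is trivial but $k$ must be positive, which is part of the statement).
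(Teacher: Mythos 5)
Your proof is correct and follows essentially the same route as the paper: rewrite \eqref{eq4Marko} as $m=\gcd(m,r-1)\cdot\frac{r^k-1}{r-1}$, get $\gcd(m,r)=1$ by reducing modulo (a prime divisor of) $r$, and establish minimality of $k$ by the size estimate $m\ge 1+r+\cdots+r^{k-1}$ against $m\mid r^j-1$. The paper phrases the minimality step slightly differently (bounding $r^i-1\ge 1+r+\cdots+r^{k-1}$ via auxiliary integers $a,b$), but this is only a cosmetic variation of your contradiction argument.
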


\begin{proof}
 From~\eqref{eq4Marko} we deduce that
\begin{equation}\label{eq5Marko}
r^k-1=\frac{r-1}{gcd(m,r-1)}m.
\end{equation}
Let $r$ be a power of a prime $r_0$.
Suppose that $gcd(m,r)>1$. Then $m\equiv0\, (\textrm{mod}~r_0)$. Since $\frac{r-1}{gcd(m,r-1)}$ is an integer, we deduce that the right-hand side of~\eqref{eq5Marko} vanishes modulo $r_0$, while the left-hand side equals $-1$, a contradiction. Hence, $gcd(m,r)=1$.

From~\eqref{eq5Marko} it is clear that $r^k\equiv 1\, (\textrm{mod}~m)$. Let $i$ be any positive integer such that $r^i\equiv 1\, (\textrm{mod}~m)$. Then $r^i-1=a m$ and $r-1=b\cdot gcd(m,r-1)$ for some integers $1\leq a$ and $b\leq r-1$. Hence we deduce from~\eqref{eq5Marko} that
$$m\frac{r^k-1}{r^i-1}\leq am\frac{r^k-1}{r^i-1}=\frac{r-1}{gcd(m,r-1)}m=bm\leq m(r-1),$$
which yields $r^i-1\geq 1+r+r^2+\cdots+ r^{k-1}$. Hence, $i\geq k$ as claimed.
\end{proof}

The next result is deduced immediately from Lemmas~\ref{lemmaMarko1} and~\ref{lemmaMarko2}.
\begin{corollary}\label{corMarko1}
Let $r$ be a power of a prime, and let 
$k$ and $m$ be positive integers 
satisfying~\eqref{eq4Marko}. 
If $h(x)\in \mathbb{F}_r[x]$ is a monic factor in $\Phi_m(x)$, which is irreducible over $\mathbb{F}_r$, then its degree is $k$, and the cyclic linear $[m,k]_r$ code with the parity-check polynomial $h(x)$ is equidistant and all its nonzero codewords have weight
$$m\cdot \frac{r^{k}-r^{k-1}}{r^k-1}.$$
\end{corollary}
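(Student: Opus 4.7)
The plan is to read off both conclusions directly from the two lemmas that precede the corollary, after verifying that condition~\eqref{eq4Marko} allows us to invoke them without loss.

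First I would apply Lemma~\ref{lemmaMarko2} to the pair $(k,m)$: under hypothesis~\eqref{eq4Marko} we obtain $\gcd(m,r)=1$ and $k$ is the \emph{least} positive integer with $r^k\equiv 1\pmod{m}$. The first of these conditions legitimises the cyclotomic-polynomial framework of Section~\ref{sec:pre}; the second, combined with the description quoted there from \cite[Theorem~9.16]{wan} (that $\Phi_m(x)$ factors over $\mathbb{F}_r$ into $\phi(m)/k$ distinct monic irreducibles each of degree equal to the multiplicative order of $r$ modulo $m$), immediately yields $\deg h(x)=k$. This settles the first assertion, and it also tells us that $C=\langle(x^m-1)/h(x)\rangle$ is genuinely an $[m,k]_r$ cyclic code with parity-check polynomial $h(x)$, so Lemma~\ref{lemmaMarko1} is applicable.

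Next I would substitute \eqref{eq4Marko} into the bound~\eqref{eq3Marko}: the relation $\frac{\gcd(m,r-1)}{r-1}=\frac{m}{r^k-1}$ forces the right-hand side of~\eqref{eq3Marko} to vanish, so every nonzero $\mathbf{c}\in C$ satisfies
\[
Z(\mathbf{c})=\frac{(r^{k-1}-1)m}{r^k-1},
\]
and hence has constant Hamming weight
\[
w(\mathbf{c})=m-Z(\mathbf{c})=m\cdot\frac{r^k-r^{k-1}}{r^k-1},
\]
matching the formula~\eqref{eq6Marko} already recorded in Section~\ref{sec:pre}. Equidistance then follows, as noted in the paragraph after~\eqref{eq6Marko}, from linearity: for distinct $\mathbf{c}_1,\mathbf{c}_2\in C$ the difference $\mathbf{c}_1-\mathbf{c}_2$ is a nonzero codeword, so $d(\mathbf{c}_1,\mathbf{c}_2)=w(\mathbf{c}_1-\mathbf{c}_2)$ takes the same value above.

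Honestly, there is no serious obstacle here: the corollary is precisely the specialisation of Lemma~\ref{lemmaMarko1} to codes whose parameters make the error term collapse, together with the degree information from Lemma~\ref{lemmaMarko2}. The only point that requires a moment's care is confirming that the integer $k$ appearing in the hypothesis is indeed the multiplicative order of $r$ modulo $m$ (so that the irreducible factors of $\Phi_m(x)$ have degree exactly $k$, rather than a proper divisor); this is precisely what Lemma~\ref{lemmaMarko2} supplies.
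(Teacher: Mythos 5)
Your proposal is correct and follows exactly the paper's intended deduction: the paper itself states that the corollary "is deduced immediately from Lemmas~\ref{lemmaMarko1} and~\ref{lemmaMarko2}," and your argument (degree $k$ from Lemma~\ref{lemmaMarko2} together with the factorization of $\Phi_m(x)$ quoted from \cite{wan}, the vanishing of the right-hand side of~\eqref{eq3Marko} under~\eqref{eq4Marko}, and equidistance via linearity as already noted after~\eqref{eq6Marko}) is precisely that deduction, carried out correctly.
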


Cyclic equidistant codes were characterized in~\cite{clark}. Actually, they are just a bit more general then the codes from Corollary~\ref{corMarko1}. 
(If you repeat a cyclic equidistant code you obtain a cyclic equidistant code.) 
For $m$  a prime, 
Corollary~\ref{corMarko1} provides all cyclic equidistant codes. For a characterization of all (not necessarily cyclic) linear equidistant codes see~\cite{bonisoli}.

In what follows we restrict the conditions on parameters $m,r,k$ in Corollary~\ref{corMarko1}.  An (odd) prime $p=m$ of the form $$p=\frac{r^k-1}{r-1},$$ where $r$ is a power of some prime, is said to be
\emph{projective}~\cite{JZ}. 
In this case it may be seen that $k$ is necessarily
 a prime. Note that a  projective prime with $k=2$ 
is necessarily a Fermat prime, and a projective prime 
with $r=2$ is a Mersenne prime (see~\cite{JZ}).

\begin{corollary}\label{corMarko2}
Let $r$ be a power of a prime, and let $p=\frac{r^k-1}{r-1}$ be a projective prime.
If $h(x)\in \mathbb{F}_r[x]$ is an  irreducible
monic factor in $\Phi_p(x)$  
 over $\mathbb{F}_r$, then its degree is $k$ 
and the cyclic linear $[p,k]_r$ code with the 
parity-check polynomial $h(x)$ is equidistant 
and all its nonzero codewords have $p-r^{k-1}>0$ zero entries.
\end{corollary}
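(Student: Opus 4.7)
The plan is to derive Corollary~\ref{corMarko2} as an immediate specialization of Corollary~\ref{corMarko1} to the case $m=p$. The argument reduces to three short checks: verifying that $(m,r,k)=(p,r,k)$ satisfies the condition~\eqref{eq4Marko}; invoking Corollary~\ref{corMarko1}; and simplifying the resulting weight formula.

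First I would compute $\gcd(p,r-1)$. Since $p$ is an odd prime, $\gcd(p,r-1)\in\{1,p\}$; but $p=1+r+r^2+\cdots+r^{k-1}>r-1$, so the second possibility is excluded and $\gcd(p,r-1)=1$. Consequently \eqref{eq4Marko} collapses to $\frac{r^k-1}{r-1}=p$, which is precisely the definition of a projective prime. Thus the hypotheses of Corollary~\ref{corMarko1} are in force with $m=p$.

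Next I would invoke Corollary~\ref{corMarko1} directly. This yields $\deg h(x)=k$, the equidistance of the cyclic $[p,k]_r$ code $C$ with parity-check polynomial $h(x)$, and that every nonzero codeword of $C$ has Hamming weight $p\cdot\frac{r^k-r^{k-1}}{r^k-1}$. Using the identity $p(r-1)=r^k-1$, this weight telescopes to $r^{k-1}$, so the number of zero entries of each nonzero codeword equals $p-r^{k-1}$. Strict positivity of this quantity follows from the fact that $k$ must be prime (as noted in the paragraph preceding the statement), hence $k\ge 2$, whence $p=1+r+\cdots+r^{k-1}>r^{k-1}$.

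There is no substantial obstacle here: the corollary is essentially a cosmetic reparameterization of Corollary~\ref{corMarko1} in the special regime where $m$ is a projective prime. The only point requiring a moment of care is the $\gcd$ calculation that bridges the hypothesis~\eqref{eq4Marko} with the defining equality $p(r-1)=r^k-1$, after which both the weight simplification and the inequality $p>r^{k-1}$ are one-line computations.
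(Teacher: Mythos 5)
Your proposal is correct and follows the same route as the paper, which proves the corollary in one line by noting $\tfrac{r^k-1}{r-1}=p=\tfrac{p}{\gcd(p,r-1)}$ and citing Corollary~\ref{corMarko1}; you merely spell out the $\gcd$ check, the weight simplification to $r^{k-1}$, and the positivity of $p-r^{k-1}$, all of which are accurate.
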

\begin{proof}
Since $\frac{r^k-1}{r-1}=p=\frac{p}{gcd(p,r-1)}$ the claim follows from Corollary~\ref{corMarko1}.
\end{proof}
\begin{example}
The Mersenne prime $$31=\frac{2^{5}-1}{2-1}=\frac{5^3-1}{5-1}$$ induces equidistant cyclic codes of the form $[31,5]_2$ and $[31,3]_5$, where all nonzero codewords have $15$ and $6$ zero entries, respectively. 
\end{example}

We state a special case of Corollary~\ref{corMarko2},
addressing the main goal of this paper, separately.

\begin{corollary}\label{corMarko4}
Let $p$ and $q$ be odd primes such that $p=\frac{q^k-1}{q-1}$ for some positive integer $k$. If $h(x)\in \mathbb{F}_q[x]$ 
is an irreducible monic factor in $\Phi_p(x)$ over $\mathbb{F}_q$, 
then its degree is $k$ and the cyclic linear $[p,k]_q$ code with the 
parity-check polynomial $h(x)$ is equidistant and all its nonzero codewords have $p-q^{k-1}>0$ zero entries.
\end{corollary}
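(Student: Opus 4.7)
The plan is to observe that Corollary~\ref{corMarko4} is nothing but the specialization of the already proved Corollary~\ref{corMarko2} to the case where the prime power $r$ is taken to be an odd prime $q$. So the proof amounts to verifying that the hypotheses of Corollary~\ref{corMarko2} are met with $r=q$, and then to check the inequality $p-q^{k-1}>0$.

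First I would note that since $q$ is prime it is in particular a prime power, hence Corollary~\ref{corMarko2} applies with $r:=q$. The assumption $p=(q^k-1)/(q-1)$ in Corollary~\ref{corMarko4} matches the defining equation of a projective prime in Corollary~\ref{corMarko2}, so every hypothesis is satisfied. Invoking Corollary~\ref{corMarko2} directly yields that each irreducible monic factor $h(x)$ of $\Phi_p(x)$ in $\mathbb{F}_q[x]$ has degree $k$, and that the cyclic $[p,k]_q$ code with parity-check polynomial $h(x)$ is equidistant with every nonzero codeword having exactly $p-q^{k-1}$ zero entries.

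The one small item left to record is the strict inequality $p-q^{k-1}>0$. This is immediate from the geometric-series identity
\[
p \;=\; \frac{q^k-1}{q-1} \;=\; 1+q+q^2+\cdots+q^{k-1},
\]
which exceeds $q^{k-1}$ as soon as $k\ge 1$ and $q\ge 2$; both conditions hold here since $q$ is an odd prime and $k$ is a positive integer.

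There is no genuine obstacle in this argument: the only reason for stating Corollary~\ref{corMarko4} separately is that the parameter regime it isolates (both $p$ and $q$ odd primes) is precisely the one needed to feed into Theorem~\ref{the:main} via Theorem~\ref{the:main2}, so making this restricted statement explicit streamlines the application in the next step of the paper.
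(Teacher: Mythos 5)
Your proposal is correct and matches the paper exactly: the paper states Corollary~\ref{corMarko4} as a special case of Corollary~\ref{corMarko2} with $r=q$ and gives no separate proof, which is precisely your reduction. Your added remark that $p=1+q+\cdots+q^{k-1}>q^{k-1}$ is a harmless verification of a positivity already contained in Corollary~\ref{corMarko2}.
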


Using Corollary~\ref{corMarko4} the proof of Theorem~\ref{the:main}
is now straightforward.

\bigskip
 
\begin{proofT}
Let $C$ be a $[p,k]_q$ cyclic code constructed in Corollary~\ref{corMarko4}. Since $C$ satisfies the assumptions of Theorem~\ref{the:main2}, it follows that the permutation group $G(C)$ is a transitive permutation group of degree $pq$ with the intersection density $q$. 
\end{proofT}

Projective primes are related to the classification of finite simple groups. Based 
on heuristic arguments and computational evidence Jones and Zvonkin recently 
stated an interesting conjecture claiming that there are infinitely many projective 
primes~\cite[Conjecture~1.3]{JZ}. They made an even stronger conjecture, 
claiming that for any fixed prime $k\geq 3$, there are infinitely 
many primes $q$ such that $$\frac{q^k-1}{q-1}$$
is a prime~\cite[Conjecture~6.5]{JZ}. If this conjecture is true, then there are 
infinitely many pairs $p,q$ of primes that satisfy the assumption in 
Theorem~\ref{the:main}. In any case, the computational 
evidence in~\cite[Table~3]{JZ} implies that there are plenty of such pairs.

\begin{remark}\label{remark2:marko}
Observe that if in Lemma~\ref{lemmaMarko1} we have $$\frac{(r^{k-1}-1)m}{r^k-1}-\left(1-\frac{1}{r}\right)\left(\frac{gcd(m,r-1)}{r-1}-\frac{m}{r^k-1}\right)r^{k/2}>0$$
or equivalently
$$m>gcd(m,r-1)\cdot (r^{k/2}+1)\cdot \frac{r^{k/2-1}}{r^{k/2-1}+1},$$
then $Z({\bf c})>0$. In particular, this is true whenever $m\geq gcd(m,r-1)\cdot (r^{k/2}+1)$, which simplifies into $m\geq r^{k/2}+1$ whenever $m$ is a prime. 

So, let  $p$ and $q$ be odd primes, let $k$ be the smallest positive integer such that $q^k=1\, (\textrm{mod}~p)$, and assume that $p\geq q^{k/2}+1$. 
Then by the argument in the previous paragraph
we again have that each  codeword has some zero entries, leading to a construction of a transitive permutation group of degree
$pq$ and intersection density $q$.

For example, the parameters $p=757$, $q=3$, $k=9$ have these properties. Namely, $q^k-1=26\cdot 757$, $p\geq q^{k/2}+1\doteq 141.3$, and it can we verified that $q^i\neq 1 \, (\textrm{mod}~p)$ for $1\leq i\leq 8$.

\end{remark}




\begin{thebibliography}{99}
\begin{footnotesize}
\itemsep=0pt

\bibitem{bonisoli}  A.~Bonisoli, 
Every equidistant linear code is a sequence of dual Hamming codes,
{\em Ars Combin.} {\bf 18} (1984), 181--186.

\bibitem{clark} W.~E.~Clark, 
Equidistant cyclic codes over GF(q),
{\em Discrete Math.} {\bf 17} (1977), 139--141.

\bibitem{DKM21}
S.~F.~Du, K.~Kutnar and D.~Maru\v si\v c,
Resolving the hamiltonian problem for
vertex-transitive
graphs of order a product of two primes,
to appear in {\em Combinatorica}.

\bibitem{pless}
W.~C.~Huffman and V.~Pless,
Fundamentals of error-correcting codes,
Cambridge University Press, Cambridge, 2003.

\bibitem{JZ} G.~A.~Jones and A.~K.~Zvonkin,  
Primes in geometric series and finite permutation groups,
\textrm{https://arxiv.org/abs/2010.08023v2}, (2020).


\bibitem{HKMM21}
A.~Hujdurovi\'c, K.~Kutnar, D.~Maru\v si\v c and \v S.~Miklavi\v c,
Intersection density of transitive groups of certain degrees, manuscript.

\bibitem{LSP}
C.~H.~Li, S.~J.~Song and V.~R.~T.~Pantangi, 
Erd\"os-Ko-Rado problems for permutation groups, 
arXiv preprint arXiv:2006.10339, 2020.

\bibitem{LN} R.~Lidl and H.~Niederreiter,
Finite fields. With a foreword by P. M. Cohn. Second edition.
Encyclopedia of Mathematics and its Applications, 20. Cambridge University Press, Cambridge, 1997.

\bibitem{DM81}
D.~Maru\v si\v c,
On vertex symmetric digraphs,
{\em Discrete Math.} {\bf 36} (1981), 69--81.

\bibitem{MS2}
D.~Maru\v si\v c, R.~Scapellato,
Characterizing vertex-transitive $pq$-graphs with an imprimitive subgroup of automorphisms,
{\em J.~Graph Theory} {\bf 16} (1992), 375--387.

\bibitem{MS5}
D.~Maru\v si\v c and R.~Scapellato,
Classifying vertex-transitive graphs whose order is a product of two primes,
{\em Combinatorica} {\bf 14} (1994), 187--201.


\bibitem{mceliece}
R.~J.~McEliece,
Irreducible cyclic codes and Gauss sums.
Combinatorics (Proc. NATO Advanced Study Inst., Breukelen, 1974), Part 1: Theory of designs, finite geometry and coding theory, pp. 179--196. Math. Centre Tracts, No. 55, Math. Centrum, Amsterdam, 1974.

\bibitem{MRS21}
K.~Meagher, A.~S.~Razafimahatratra and P.~Spiga,
On triangles in derangement graphs,
{\em J.~Combin. Theory, Ser. A} {\bf 180} (2021), 105390.

\bibitem{niederreiter77} H.~Niederreiter,
Weights of cyclic codes.
{\em Information and Control} {\bf 34} (1977),  130--140.


\bibitem{PWX}
C.~E.~Praeger, R.~J.~Wang,   M.~Y.~Xu,
Symmetric graphs of order a product of two distinct primes,
{\em J. Combin. Theory Ser B} {\bf 58} (1993), 299--318.

\bibitem{PX}
C.~E.~Praeger,  M.~Y.~Xu,
Vertex primitive transitive graphs of order a product
of two distinct primes,
{\em J. Combin. Theory Ser. B} {\bf 59} (1993), 245--266.


\bibitem{R21}
A.~S.~Razafimahatratra,
On multipartite derangement graphs,
{\em Ars Math. Contemp.} (2021), doi: https://doi.org/10.26493/1855-3974.2554.856.

\bibitem{wan} Z.-X.~Wan,
Finite fields and Galois rings,
World Scientific Publishing Co. Pte. Ltd., Hackensack, NJ, 2012.











 \end{footnotesize}
 \end{thebibliography}
 \end{document}